\newtheorem{theorem}{Theorem}
\newtheorem{corollary}{Corollary}
\newtheorem{lemma}{Lemma}
\theoremstyle{remark}
\newtheorem*{remark}{Remark}
\newtheorem{case}{Case}
\DeclareMathOperator{\lcm}{lcm}
\title{Elementary Bounds on Digital Sums of Powers, Factorials, and LCMs}
\author{David G. Radcliffe}
\date{\today}
\begin{document}

\begin{abstract}
    We prove logarithmic lower bounds on digital sums of powers, multiples of powers, factorials,
    and the least common multiple of $\{1,\ldots, n\}$, using only elementary number theory.
    We conclude with an expository proof of Stewart's theorem on digital sums of powers,
    which uses Baker's theorem on linear forms in logarithms.
\end{abstract}

\maketitle

\section{Introduction} \label{sec:introduction}
In this expository article, we prove lower bounds on digital sums
of powers, multiples of powers, factorials, and the least common
multiple of $\{1,\ldots, n\}$, using only elementary number theory.

We were inspired by the following problem, which was posed and solved
by Wac{\l}aw Sierpi{\'n}ski~\cite[Problem 209]{sierpinski1970}:
\begin{quote}
    \emph{Prove that the sum of digits of the number $2^n$ (in decimal system)
        increases to infinity with $n$.}
\end{quote}

The reader is urged to attempt this problem independently before proceeding.
Note that it is not enough to prove that the sum of digits of $2^n$ is
unbounded, since the sequence is not monotonic.

Consider the sequence of powers of $2$ (sequence \href{https://oeis.org/A000079}{A000079}
in the On-Line Encyclopedia of Integer Sequences):

\[
    1, 2, 4, 8, 16, 32, 64, 128, 256, 512, 1024, \ldots
\]

This sequence grows very rapidly. Now define another sequence by summing
the decimal digits of each term. For example, $16$ becomes $1+6=7$, and $32$
becomes $3+2=5$. 
The first few terms of this new sequence (\href{https://oeis.org/A001370}{A001370}) are listed below:

\[
    1, 2, 4, 8, 7, 5, 10, 11, 13, 8, 7, \ldots
\]

This sequence of digital sums grows much more slowly and is not monotonic.
Nevertheless, it is reasonable to conjecture that it tends to infinity.
Indeed, one might guess that the sum of the decimal digits of $2^n$
is approximately $4.5\, n \log_{10}2$,
since $2^n$ has $\lfloor n \log_{10}2\rfloor + 1$ decimal digits,
and the digits seem to be approximately uniformly distributed among
$0, 1, 2, \ldots, 9$.
However, this stronger conjecture remains unproved.
See Figure~\ref{fig:digital-sum-2-to-n}.

\begin{figure}
    \centering
    \includegraphics[width=0.8\linewidth]{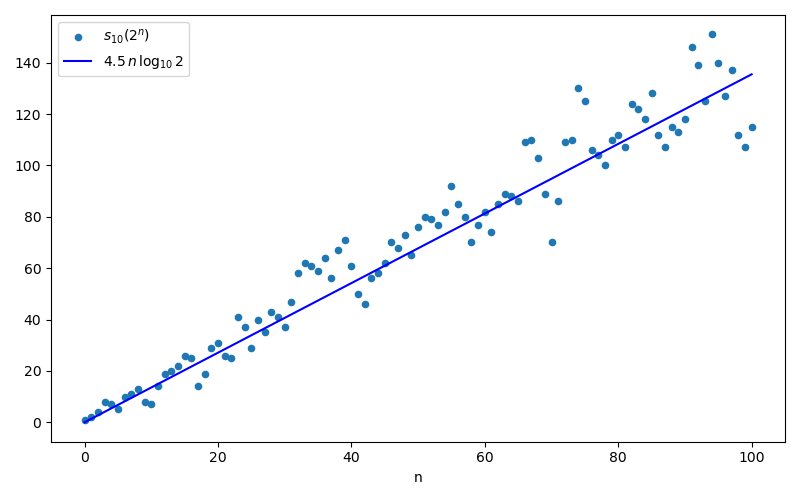}
    \caption{Scatter plot of the digital sum of $2^n$ for $n \le 100$
        together with the heuristic linear approximation.}
    \label{fig:digital-sum-2-to-n}
\end{figure}

In Section~\ref{sec:powers-of-two}, we prove that the digital
sum of $2^n$ is greater than $\log_4 n$ for all $n \ge 1$.
Before doing so, we review the relevant notation and terminology.

\section{Notation and terminology} \label{sec:notation}
For integers $N \ge 0$ and $b \ge 2$, the \emph{base-$b$ expansion} of $N$
is the unique representation of the form
\[
    N = \sum_{i=0}^{\infty} d_i b^i, \quad d_i \in \{0, 1, \ldots, b-1\}.
\]
The integers $d_i$ are the \emph{base-$b$ digits} of $N$; all but finitely
many of these digits are zero. When $b = 10$, this is called the
\emph{decimal expansion}.

For an integer $b \ge 2$, we write $s_b(N)$ for the sum of the base-$b$
digits of $N$, and $c_b(N)$ for the number of nonzero digits in that expansion.
These functions are equivalent up to a constant factor,
since $c_b(N) \le s_b(N) \le (b-1) c_b(N)$ for all $N$ and $b$;
so we focus on $c_b(N)$.

The function $s_b$ is \emph{subadditive}: $s_b(M+N) \le s_b(M) + s_b(N)$
for all nonnegative integers $M$ and $N$. Equality
holds if no carries occur in the digitwise addition of $M$ and $N$.
Otherwise, each carry reduces the digital sum by $b-1$.
The function $c_b$ is likewise subadditive.

For a prime $p$, the \emph{$p$-adic valuation} of $N$,
denoted $\nu_{p}(N)$, is the exponent of $p$ in the
prime factorization of $N$. If $p$ does not divide $N$ then $\nu_{p}(N) = 0$.
The function $\nu_p$ is \emph{completely additive}:
$\nu_p(MN) = \nu_p(M) + \nu_p(N)$ for all positive integers $M$ and $N$.

We use asymptotic notation to describe the approximate size of functions~\cite{GKP94}.
Let $f$ and $g$ be real-valued functions defined on a domain $D$.
One writes $f(n) = O(g(n))$
if there exists a positive real number $C$ such that
\[
    |f(n)| \le C g(n) \quad \text{for all } n \in D.
\]
In particular, $O(1)$ denotes a bounded function.

The notation $f(n) \asymp g(n)$ means that there exist positive real numbers $C$ and $C'$ such that
\[
    C g(n) \le |f(n)| \le C' g(n) \quad \text{for all } n \in D.
\]

\section{Digital sums of powers of two} \label{sec:powers-of-two}
We present an informal proof that $c_{10}(2^n)$, the number of nonzero digits
in the decimal expansion of $2^n$, tends to infinity as $n \to \infty$.
See~\cite{radcliffe2016} for an alternative approach.

Let $n$ be a positive integer, and write the decimal expansion of $2^n$ as
\[
    2^n = \sum_{i=0}^{\infty} d_i 10^i,
\]
where each $d_i \in \{0,\dots,9\}$ and all but finitely many $d_i$ are zero.
Since $2^n$ is not divisible by $10$, its final digit $d_0$ is nonzero.

Assume first that $n \ge 4$. Then $2^n$ is divisible by $2^4 = 16$.
Consider the last four digits of $2^n$, that is,
\[
    2^n \bmod 10^4.
\]
This number is divisible by $16$. If the digits $d_1,d_2,d_3$ were all zero,
then this remainder would be less than $10$, and hence could not be divisible
by $16$. Therefore, at least one of the digits $d_1,d_2,d_3$ is nonzero.

Now assume $n \ge 14$. Then $2^n$ is divisible by $2^{14}$.
Since $2^{14} > 10^4$, any positive number divisible by $2^{14}$
must be at least $10^4$. Thus, the last $14$ digits of $2^n$,
\[
    2^n \bmod 10^{14},
\]
cannot be less than $10^4$.
If the digits $d_4,d_5,\dots,d_{13}$ were all zero, this remainder would be
less than $10^4$, which is impossible. Hence, at least one digit in this block
is nonzero.

Continuing in this way, as $n$ increases, we obtain more blocks of decimal
digits, each containing at least one nonzero digit.
These blocks are disjoint, and the number of such blocks grows without
bound as $n \to \infty$.

Therefore, the number of nonzero decimal digits of $2^n$ tends to infinity as
$n \to \infty$. See Figure~\ref{fig:blocks}.

\begin{figure}
    \begin{tabular}{lcrrrrr}
        $2^0$     & = &   &                                   &            &     & 1 \\
        $2^4$     & = &   &                                   &            & 1   & 6 \\
        $2^{14}$  & = &   &                                   & 1          & 638 & 4 \\
        $2^{47}$  & = &   & 1                                 & 4073748835 & 532 & 8 \\
        $2^{157}$ & = & 1 & 826877046663628647754606040895353 & 7745699156 & 787 & 2
    \end{tabular}
    \caption{Decimal digits of $2^n$, separated into blocks.
        Each block contributes at least one nonzero digit.}
    \label{fig:blocks}
\end{figure}

Let us formalize this argument.

\begin{theorem} \label{thm:digit-sum-bound-2-to-n}
    Let $(e_k)$ be a sequence of integers such that
    $e_1 \ge 1$ and $2^{e_k} > 10^{e_{k-1}}$ for all $k \ge 2$.
    Suppose that $N$ is divisible by $2^{e_{k}}$ but not by $10$.
    Then $c_{10}(N) \ge k$.
\end{theorem}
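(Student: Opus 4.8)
The plan is to formalize the block decomposition suggested by the informal argument. I would partition the decimal positions $0, 1, \ldots, e_k - 1$ into $k$ consecutive, disjoint blocks and show that each block contributes at least one nonzero digit of $N$; since the blocks are disjoint, this yields $c_{10}(N) \ge k$ immediately. Concretely, I would set $B_1 = \{0, 1, \ldots, e_1 - 1\}$ and, for $2 \le j \le k$, $B_j = \{e_{j-1}, e_{j-1}+1, \ldots, e_j - 1\}$. Before analyzing the blocks I would record that the sequence is strictly increasing: since $10 > 2$, the hypothesis gives $2^{e_j} > 10^{e_{j-1}} > 2^{e_{j-1}}$, hence $e_j > e_{j-1}$. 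Consequently $e_j \le e_k$, so $2^{e_j} \mid 2^{e_k} \mid N$ for every $j \le k$, a fact I will use repeatedly.

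For the first block I would invoke the hypothesis $10 \nmid N$, which forces the final digit $d_0$ to be nonzero; since $e_1 \ge 1$, position $0$ lies in $B_1$, so $B_1$ already contains a nonzero digit.

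The heart of the argument is the generic block $B_j$ with $2 \le j \le k$. Here I would study the truncation $r_j = N \bmod 10^{e_j}$, which is precisely the integer whose decimal digits are $d_0, d_1, \ldots, d_{e_j - 1}$. Writing $10^{e_j} = 2^{e_j} 5^{e_j}$, the $2$-adic divisibility of $N$ survives reduction, so $2^{e_j} \mid r_j$; moreover $r_j > 0$ because its last digit $d_0$ is nonzero. Thus $r_j$ is a positive multiple of $2^{e_j}$, whence $r_j \ge 2^{e_j} > 10^{e_{j-1}}$ by hypothesis. On the other hand, if every digit in $B_j$ vanished, that is $d_{e_{j-1}} = \cdots = d_{e_j - 1} = 0$, then truncating further would not change the value, giving $r_j = N \bmod 10^{e_{j-1}} < 10^{e_{j-1}}$ and contradicting the lower bound just obtained. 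Therefore at least one digit in $B_j$ is nonzero.

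Combining the two cases, each of the $k$ disjoint blocks $B_1, \ldots, B_k$ contains a nonzero digit of $N$, so $c_{10}(N) \ge k$. I expect the main obstacle to be the bookkeeping that links divisibility by $2^{e_j}$ to the size lower bound on the truncation $r_j$: the crucial and slightly delicate point is that $2^{e_j}$ divides $10^{e_j}$, which is what allows the $2$-adic divisibility of $N$ to persist modulo $10^{e_j}$ and then, via the hypothesis $2^{e_j} > 10^{e_{j-1}}$, be converted into the statement that $B_j$ cannot be entirely zero.
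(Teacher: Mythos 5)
Your proof is correct and complete; it is the direct (non-inductive) formalization of the informal block argument from Section 3, whereas the paper proceeds by induction on $k$. In the paper's proof one writes $N = 10^{e_{k-1}} q + r$ with $0 \le r < 10^{e_{k-1}}$, notes that $q \ge 1$ because $N \ge 2^{e_k} > 10^{e_{k-1}}$, checks that $r$ is a multiple of $2^{e_{k-1}}$ not divisible by $10$, applies the induction hypothesis to $r$, and finishes with the concatenation identity $c_{10}(N) = c_{10}(q) + c_{10}(r)$. The substance of the two arguments is identical: your truncation $r_j = N \bmod 10^{e_j}$ plays exactly the role of the paper's remainder $r$, and both proofs hinge on the same two facts, namely that $2^{e_j}$ divides both $N$ and $10^{e_j}$ (so the remainder inherits the divisibility) and that the size hypothesis $2^{e_j} > 10^{e_{j-1}}$ converts this divisibility into a lower bound forcing a nonzero digit in the block. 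In effect, your proof is the paper's induction unrolled. What your version buys: the disjoint-blocks picture is explicit, no induction hypothesis needs restating, and you need only that disjoint digit positions contribute distinct nonzero digits rather than the full concatenation identity. What the induction buys: all bookkeeping is concentrated in a single split per step, and the strict monotonicity of $(e_j)$ --- which you rightly establish up front, since your blocks must be nonempty and disjoint --- remains implicit there (though the paper does quietly rely on $e_{k-1} < e_k$ when asserting that $2^{e_{k-1}}$ divides $N$).
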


\begin{proof}
    We argue by induction on $k$.
    The case $k=1$ is immediate, since any positive integer
    has at least one nonzero digit.

    Assume now that $k \ge 2$, and that the statement holds for $k-1$.
    Apply the division algorithm to write
    \[
        N = 10^{e_{k-1}} q + r, \quad 0 \le r < 10^{e_{k-1}},
    \]
    for integers $q$, $r$.

    Because $N \ge 2^{e_k} > 10^{e_{k-1}}$ by hypothesis,
    the quotient satisfies $q \ge 1$.

    Next, both $N$ and $10^{e_{k-1}} q$ are divisible by $2^{e_{k-1}}$,
    hence their difference
    \[
        r = N - 10^{e_{k-1}} q
    \]
    is also divisible by $2^{e_{k-1}}$.

    Moreover, $r$ is not divisible by $10$, since $10^{e_{k-1}} q$
    is divisible by $10$ and $N$ is not.

    Therefore, $c_{10}(r) \ge k - 1$ by the induction hypothesis.

    Finally, the decimal expansion of $N$ is obtained by concatenating
    the decimal expansion of $q$ with the (possibly zero-padded)
    expansion of $r$. Thus,
    \[
        c_{10}(N) = c_{10}(q) + c_{10}(r) \ge 1 + (k - 1) = k.
    \]
    This completes the proof.
\end{proof}

We now apply Theorem~\ref{thm:digit-sum-bound-2-to-n} to obtain our desired lower bound.

\begin{corollary} \label{even-powers-base-ten-limit}
    Let $a$ be a positive integer that is divisible by $2$ but not divisible by $10$.
    Then $c_{10}(a^n) \ge \log_4 n$ for all $n > 1$.
\end{corollary}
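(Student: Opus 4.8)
The plan is to apply Theorem~\ref{thm:digit-sum-bound-2-to-n} with a carefully chosen geometric sequence $(e_k)$ and with $N = a^n$. The first task is to pin down the arithmetic of $a^n$. Since $a$ is divisible by $2$ but not by $10$, it is even and not divisible by $5$; writing $s = \nu_2(a) \ge 1$, complete additivity of $\nu_2$ gives $\nu_2(a^n) = sn \ge n$, while $\nu_5(a^n) = n\,\nu_5(a) = 0$. Hence $a^n$ is divisible by $2^n$ but not by $10$, which is exactly the shape of hypothesis the theorem demands of $N$.

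Next I would construct the sequence. After taking base-$2$ logarithms, the growth condition $2^{e_k} > 10^{e_{k-1}}$ becomes $e_k > e_{k-1}\log_2 10$, and since $\log_2 10 \approx 3.32 < 4$, the choice $e_k = 4^{k-1}$ satisfies it comfortably: the inequality collapses to $4 > \log_2 10$, which is true. This is the key idea of the whole argument, and it is precisely where the base $4$ in the statement originates; taking the ratio to be $4$ rather than something closer to $\log_2 10$ trades a slightly weaker constant for a clean, margin-ful inequality. One also checks $e_1 = 4^0 = 1 \ge 1$, so every hypothesis on $(e_k)$ holds.

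It then remains to select the index $k$. I would take $k$ to be the largest integer with $4^{k-1} \le n$, namely $k = \lfloor \log_4 n\rfloor + 1$. For this $k$ we have $e_k = 4^{k-1} \le n \le \nu_2(a^n)$, so $2^{e_k}$ divides $a^n$, and Theorem~\ref{thm:digit-sum-bound-2-to-n} yields $c_{10}(a^n) \ge k$. Finally $k = \lfloor \log_4 n\rfloor + 1 > \log_4 n$, which delivers the claimed bound.

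I do not anticipate a genuine obstacle here once the sequence is fixed, since the real content is packaged inside Theorem~\ref{thm:digit-sum-bound-2-to-n}. The only point requiring care is the verification $4 > \log_2 10$ together with the bookkeeping that $e_k = 4^{k-1}$ simultaneously (i) grows fast enough to force $2^{e_k} > 10^{e_{k-1}}$ and (ii) grows slowly enough that the constraint $e_k \le n$ still permits about $\log_4 n$ terms. Balancing these two competing requirements is the crux, and the base $4$ is the sweet spot that makes both hold.
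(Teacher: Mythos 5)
Your proposal is correct and follows essentially the same route as the paper: the same sequence $e_k = 4^{k-1}$, the same verification that $a^n$ is divisible by $2^{e_k}$ but not by $10$, and the same application of Theorem~\ref{thm:digit-sum-bound-2-to-n}; the only cosmetic difference is your choice $k = \lfloor \log_4 n\rfloor + 1$ versus the paper's $k = \lceil \log_4 n \rceil$, which in fact yields the marginally stronger strict inequality $c_{10}(a^n) > \log_4 n$.
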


\begin{proof}
    Let $e_{k} = 4^{k-1}$ for $k \ge 1$.
    This sequence satisfies $e_{1} \ge 1$ and $2^{e_{k}} > 10^{e_{k-1}}$ for all $k \ge 2$.

    Let $n > 1$ and $k = \lceil \log_4 n \rceil$,
    so that $4^{k-1} < n \le 4^k$. Then $a^n$ is divisible by $2^n$,
    so $a^n$ is also divisible by $2^{e_{k}}$.
    Moreover, $a^n$ is not divisible by $10$.

    Therefore, $c_{10}(a^n) \ge k \ge \log_4 n$ by Theorem~\ref{thm:digit-sum-bound-2-to-n}.
\end{proof}

A similar argument applies if $a$ is divisible by $5$ but not divisible by $10$,
or more generally, if the prime factorization of $a$ contains unequal numbers
of twos and fives. In the next section, we generalize this insight to
non-decimal base expansions.

\section{Digital sums of powers in other bases} \label{sec:digital-sums-powers}

In this section, we prove a logarithmic lower bound for $c_b(a^n)$.
The first step (Theorem~\ref{thm:multiples-of-powers} below)
generalizes the corresponding base-$10$ argument from the previous section.

\begin{theorem} \label{thm:multiples-of-powers}
    Let $2 \le a < b$ be integers with $a \mid b$.
    Let $(e_{k})$ be a sequence of integers such that $e_{1} \ge 1$ and
    $a^{e_{k}} > b^{e_{k-1}}$ for all $k \ge 2$.
    Suppose that $N$ is divisible by $a^{e_{k}}$ but not by $b$.
    Then $c_b(N) \ge k$.
\end{theorem}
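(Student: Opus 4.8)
The plan is to argue by induction on $k$, following the proof of Theorem~\ref{thm:digit-sum-bound-2-to-n} almost verbatim but with $a$ and $b$ playing the roles formerly taken by $2$ and $10$. The base case $k=1$ is again immediate: since $N$ is divisible by $a^{e_1}$ (with $a \ge 2$, $e_1 \ge 1$) but not by $b$, it is a positive integer and hence has at least one nonzero base-$b$ digit, so $c_b(N) \ge 1$.

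For the inductive step, suppose $k \ge 2$ and that the claim holds for $k-1$. I would apply the division algorithm to split off the low-order block of digits, writing
\[
    N = b^{e_{k-1}} q + r, \qquad 0 \le r < b^{e_{k-1}}.
\]
Because $r < b^{e_{k-1}}$, the base-$b$ expansion of $N$ is the concatenation of the expansion of $q$ with the (zero-padded) expansion of $r$, with no interaction between the two blocks; this gives $c_b(N) = c_b(q) + c_b(r)$. The three facts to establish are then: (i) $q \ge 1$; (ii) $a^{e_{k-1}} \mid r$; and (iii) $b \nmid r$. Granting these, the induction hypothesis applies to $r$ and yields $c_b(r) \ge k-1$, while (i) gives $c_b(q) \ge 1$, so $c_b(N) \ge 1 + (k-1) = k$, as desired.

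For (i), I would use that $N$ is a positive multiple of $a^{e_k}$, so $N \ge a^{e_k} > b^{e_{k-1}}$ by the growth condition, forcing $q \ge 1$. For (iii), since $b \mid b^{e_{k-1}}$ we have $r \equiv N \pmod{b}$, and $N$ is not divisible by $b$, so neither is $r$. Step (ii) is where the hypothesis $a \mid b$ enters, and I expect it to be the only genuinely new ingredient beyond the base-$10$ argument: from $a \mid b$ we get $a^{e_{k-1}} \mid b^{e_{k-1}}$, hence $a^{e_{k-1}}$ divides $b^{e_{k-1}}q$; since $a^{e_{k-1}} \mid a^{e_k} \mid N$ as well, it divides the difference $r = N - b^{e_{k-1}}q$. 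The point to watch is the index bookkeeping underlying (ii) and (iii), namely that $e_{k-1} \ge 1$ and $e_{k-1} \le e_k$. I would dispatch this with a short preliminary induction: from $e_1 \ge 1$ and $a^{e_j} > b^{e_{j-1}}$ (using $a < b$) one checks that every $e_j \ge 1$, and then $a^{e_k} > b^{e_{k-1}} > a^{e_{k-1}}$ forces $e_k > e_{k-1}$; with these in hand the argument closes exactly as in the base-$10$ case.
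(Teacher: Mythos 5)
Your proof is correct and is essentially identical to the paper's: the paper proves this theorem by instructing the reader to replace $2$ with $a$ and $10$ with $b$ throughout the proof of Theorem~\ref{thm:digit-sum-bound-2-to-n}, which is exactly the induction you carry out. Your added check that $e_{k-1} \ge 1$ and $e_k > e_{k-1}$ is a welcome bit of bookkeeping that the paper leaves implicit.
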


\begin{proof}
    In the proof of Theorem~\ref{thm:digit-sum-bound-2-to-n},
    replace $2$ with $a$ and $10$ with $b$ throughout.
\end{proof}

The conclusion of Theorem~\ref{thm:multiples-of-powers} can be converted
into an explicit logarithmic lower bound, as described below.
A similar result was independently proved by Shreyansh Jaiswal
(private communication).

\begin{theorem} \label{thm:multiples-powers-log-bound}
    Let $2 \le a < b$ be integers with $a \mid b$.
    Suppose that $N$ is divisible by $a^n$ but not $b$. Then
    there exists $C > 0$, depending only on $a$ and $b$, such that
    \[
        c_b(N) > C \log n
    \]
    for $n$ sufficiently large.
    Moreover, any constant $0 < C < (\log(\log(b)/\log(a)))^{-1}$ is admissible.
\end{theorem}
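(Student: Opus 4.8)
The plan is to optimize the choice of the auxiliary sequence $(e_k)$ fed into Theorem~\ref{thm:multiples-of-powers}: the sharp constant will come from letting $(e_k)$ grow at the slowest geometric rate the hypotheses allow, rather than at the convenient but wasteful rate used in Corollary~\ref{even-powers-base-ten-limit}. Write $\lambda = \log b/\log a$ and note $\lambda > 1$ since $a < b$. Taking logarithms, the growth requirement $a^{e_k} > b^{e_{k-1}}$ is equivalent to the arithmetic inequality $e_k > \lambda e_{k-1}$. To make $(e_k)$ increase as slowly as possible subject to this, I would set $e_1 = 1$ and $e_k = \lfloor \lambda e_{k-1}\rfloor + 1$ for $k \ge 2$; then $e_k > \lambda e_{k-1}$ holds by construction, so $(e_k)$ is a strictly increasing integer sequence satisfying the hypotheses of Theorem~\ref{thm:multiples-of-powers}.

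Next I would bound $e_k$ from above. From $e_k \le \lambda e_{k-1} + 1$ together with $e_1 = 1$, unrolling the recurrence (a one-line induction) gives
\[
    e_k \le \sum_{j=0}^{k-1} \lambda^{j} = \frac{\lambda^{k} - 1}{\lambda - 1} < \frac{\lambda^{k}}{\lambda - 1}.
\]
In particular, $e_k \le n$ holds as soon as $\lambda^{k} \le (\lambda - 1) n$, that is, whenever $k \le \log\bigl((\lambda-1)n\bigr)/\log\lambda$.

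Now fix $n$ and let $k(n)$ be the largest index with $e_k \le n$. Since $N$ is divisible by $a^n$, it is divisible by $a^{e_{k(n)}}$ (because $e_{k(n)} \le n$) but not by $b$, so Theorem~\ref{thm:multiples-of-powers} gives $c_b(N) \ge k(n)$. The displayed estimate shows $k(n) \ge \lfloor \log((\lambda-1)n)/\log\lambda\rfloor$, hence
\[
    c_b(N) \ge k(n) \ge \frac{\log n}{\log\lambda} - O(1),
\]
with the implied constant depending only on $a$ and $b$. Finally, take any $C$ with $0 < C < (\log\lambda)^{-1}$. Since $\bigl(\tfrac{1}{\log\lambda} - C\bigr)\log n \to \infty$, the right-hand side exceeds $C\log n$ for all sufficiently large $n$; as $\log\lambda = \log(\log b/\log a)$, this is precisely the claimed bound.

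The delicate point, and the only place where sharpness is at stake, is the growth rate of $(e_k)$. Theorem~\ref{thm:multiples-of-powers} converts blocks into nonzero digits one-for-one, so the leading constant in $c_b(N)$ is governed entirely by how quickly $e_k$ reaches $n$: a ratio $\lambda$ yields $k \sim \log n/\log\lambda$ and the optimal coefficient $(\log\lambda)^{-1}$, whereas any strictly larger ratio (such as the integer base used previously) strictly worsens it. I therefore expect the main care to go into checking that the additive $+1$ in the recurrence contributes only the bounded geometric correction $\tfrac{1}{\lambda-1}$ seen above, so that it is absorbed into the $O(1)$ and leaves the leading term $\log n/\log\lambda$ untouched.
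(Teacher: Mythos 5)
Your proof is correct and follows essentially the same route as the paper: the paper likewise takes $e_1 = 1$, lets $e_k$ grow at the rate $r = \log b/\log a$ (via $e_k = \lceil r e_{k-1}\rceil$), bounds $e_k < r^k/(r-1)$ by the same geometric-sum unrolling, and extracts the admissible constant $(\log r)^{-1}$ in the same way. The only difference is your recurrence $e_k = \lfloor \lambda e_{k-1}\rfloor + 1$, which is in fact slightly more careful: when $b$ is a power of $a$ (so $\lambda$ is an integer), the paper's ceiling recurrence yields $e_k = \lambda e_{k-1}$ and hence only the non-strict inequality $a^{e_k} \ge b^{e_{k-1}}$, whereas your choice always gives the strict inequality required by the hypotheses of Theorem~\ref{thm:multiples-of-powers}.
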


\begin{proof}
    Let $r = \log(b) / \log(a)$, and
    define $(e_k)$ by $e_1 = 1$ and $e_k = \lceil r e_{k-1} \rceil$
    for $k \ge 2$.
    It is routine to verify that $(e_k)$ satisfies the conditions of
    Theorem~\ref{thm:multiples-of-powers}.

    By the definition of $(e_k)$, we have
    \begin{align*}
        e_2 & < r + 1,             \\
        e_3 & < r^2 + r + 1,
    \end{align*}
    and in general,
    \begin{equation} \label{eq:ek-estimate}
        e_k < \sum_{i=0}^{k-1} r^{i}
        < \frac{r^k}{r - 1}.
    \end{equation}

    Fix an integer $n \ge r/(r-1)$, and let
    \[
        k = \left\lfloor \frac{\log((r-1)n)}{\log r} \right\rfloor.
    \]
    Then \[
        1 \le k \le \frac{\log((r-1) n)}{\log r},
    \]
    which implies that
    \[
        n \ge \frac{r^k}{r-1}.
    \]
    Therefore $n > e_k$ by~\eqref{eq:ek-estimate}, hence
    $c_b(N) \ge k$ by Theorem~\ref{thm:multiples-of-powers}.

    Since
    \[
        \left\lfloor \frac{\log((r-1)n)}{\log r} \right\rfloor
        = \frac{\log n}{\log r} + O(1),
    \]
    choosing any $C < (\log r)^{-1}$ gives
    \[
        \left\lfloor \frac{\log((r-1)n)}{\log r} \right\rfloor
        > C \log n
    \]
    for $n$ sufficiently large, which implies that $c_b(N) > C \log n$.
\end{proof}

To handle the case where $b \mid a^n$, we require a lower bound on the
$p$-adic valuation of the remaining factor after removing powers of $b$.
The following lemma provides such a bound under an irrationality assumption.

\begin{lemma} \label{lem:nu-p-estimate}
    Let $a, b \ge 2$ be integers such that $\log(a) / \log(b)$ is irrational.
    Suppose that $a^n = b^m t$, where $t \ge 1$ is an integer.
    Then there exists a prime factor $p$ of $a$, and $C > 0$ depending
    only on $a$ and $b$, such that $\nu_p(t) \ge Cn$.
\end{lemma}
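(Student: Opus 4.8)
The plan is to work entirely with $p$-adic valuations and reduce the statement to a single positivity fact that the irrationality hypothesis will supply. Taking valuations in $a^n = b^m t$ gives, for every prime $p$,
\[
  \nu_{p}(t) = n\,\nu_{p}(a) - m\,\nu_{p}(b).
\]
Since $t$ is a positive integer, $\nu_{p}(t) \ge 0$ for all $p$, and $\nu_{p}(t) = 0$ whenever $p \nmid a$ (note that if $p \mid b$ but $p \nmid a$, then $\nu_p(t) = -m\,\nu_p(b) \ge 0$ forces $m = 0$); so $t$ is built only from primes dividing $a$. I would then introduce $\rho = \min_{p \mid b} \nu_{p}(a)/\nu_{p}(b)$, the largest exponent for which $b^{\rho}$ divides $a$ in the sense of valuations. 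The divisibility $b^{m} \mid a^{n}$ is equivalent to $m\,\nu_{p}(b) \le n\,\nu_{p}(a)$ for every $p \mid b$, which I would record as the single inequality $m \le \rho n$.

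Next I would convert size into valuation. Summing the displayed identity against $\log p$ recovers $\log t = n\log a - m\log b$, and combining this with $m \le \rho n$ yields the clean lower bound
\[
  \log t \ \ge\ n\,(\log a - \rho\log b) \ =\ \delta n, \qquad \delta := \log a - \rho\log b,
\]
a quantity depending only on $a$ and $b$. Writing $R = \prod_{p \mid a} p$ for the radical of $a$ and bounding $\log t = \sum_{p \mid a} \nu_{p}(t)\log p \le \bigl(\max_{p \mid a} \nu_{p}(t)\bigr)\log R$, a pigeonhole step over the finitely many primes dividing $a$ produces a prime $p \mid a$ with
\[
  \nu_{p}(t) \ \ge\ \frac{\log t}{\log R} \ \ge\ \frac{\delta}{\log R}\, n ,
\]
so that $C = \delta / \log R$ is admissible, provided $\delta > 0$.

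The crux — and the only place the hypothesis is used — is proving $\delta > 0$, i.e.\ the strict inequality $\rho < \log a / \log b$. I would first dispatch the easy direction $\log a \ge \rho\log b$: since $\nu_{p}(a) \ge \rho\,\nu_{p}(b)$ for all $p \mid b$ by the definition of $\rho$, one has $\log a = \sum_{p} \nu_{p}(a)\log p \ge \rho\sum_{p \mid b} \nu_{p}(b)\log p = \rho\log b$. The hard part is ruling out equality. Equality would force both that $a$ has no prime factor outside $b$ and that $\nu_{p}(a) = \rho\,\nu_{p}(b)$ for every $p \mid b$, that is, $a = b^{\rho}$; but $\rho$ is rational, so writing $\rho = u/v$ in lowest terms gives $a^{v} = b^{u}$ and hence $\log a / \log b = u/v$ is rational, contradicting the assumption. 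Thus $\delta > 0$ and the argument closes. I expect this final non-equality step to be the main obstacle to state cleanly, since it is precisely where the rationality/irrationality dichotomy must be made rigorous; everything else is bookkeeping with valuations.
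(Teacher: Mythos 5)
Your proof is correct, but it follows a genuinely different route from the paper's. The paper works purely with valuations: from irrationality it deduces that the vectors $(\nu_p(a))_p$ and $(\nu_p(b))_p$ are linearly independent, fixes once and for all two primes $p \mid a$ and $q \mid b$ with $\nu_p(a)\nu_q(b) - \nu_q(a)\nu_p(b) > 0$, bounds $m$ via the single inequality $m\nu_q(b) \le n\nu_q(a)$, and substitutes into the identity $\nu_p(t) = n\nu_p(a) - m\nu_p(b)$ to get $\nu_p(t) \ge Cn$ with $C = \bigl(\nu_p(a)\nu_q(b) - \nu_q(a)\nu_p(b)\bigr)/\nu_q(b)$. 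You instead bound $m \le \rho n$ using all primes of $b$ at once, pass through the archimedean estimate $\log t \ge \delta n$, and then pigeonhole over the primes of $a$ to convert size back into valuation; for you, irrationality enters through the equality case $a = b^{u/v}$, hence $a^v = b^u$, rather than through linear independence of valuation vectors. Each approach has its merits: yours makes the use of the irrationality hypothesis completely explicit (the paper compresses that step into a one-line equivalence with linear independence) and produces the concrete constant $C = \delta/\log R$; the paper's argument yields a prime $p$ that is fixed, depending only on $a$ and $b$, whereas your pigeonhole prime may vary with $n$. That variation is compatible with the lemma as stated (only $C$ is required to depend on $a$ and $b$ alone), and the downstream uses survive it --- Lemma~\ref{lem:digit-position-growth} only needs $\log p \ge \log 2$, and in Theorem~\ref{thm:general-digit-sum} one can take the worst constant over the finitely many primes dividing $d$ when invoking Theorem~\ref{thm:multiples-powers-log-bound} --- but the paper's fixed choice of $p$ avoids even this small amount of extra bookkeeping.
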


\begin{proof}
    Since $\log(a) / \log(b)$ is irrational,
    there are no integers $u, v$ with $a^v = b^u$, except $u = v = 0$.
    Equivalently, the vectors $(\nu_p(a))_p$ and
    $(\nu_p(b))_p$ are linearly independent,
    so we can choose primes $p$ and $q$ such that
    \begin{equation} \label{eq:nudiff}
        \nu_p(a) \nu_q(b) - \nu_q(a) \nu_p(b) > 0.
    \end{equation}

    By comparing the $p$- and $q$-adic valuations of $a^n$,
    we obtain
    \begin{equation} \label{eq:nup}
        n \nu_p(a) = m \nu_p(b) + \nu_p(t)
    \end{equation}
    and
    \begin{equation} \label{eq:nuq}
        n \nu_q(a) \ge m \nu_q(b).
    \end{equation}

    Combining~\eqref{eq:nup} and~\eqref{eq:nuq} yields
    \begin{equation} \label{eq:nupq}
        \nu_p(t) \ge n \nu_p(a) - n \frac{\nu_q(a)}{\nu_q(b)} \nu_p(b) = C n,
    \end{equation}
    where
    \[
        C = \frac{\nu_p(a) \nu_q(b) - \nu_q(a) \nu_p(b)}{\nu_q(b)}.
    \]
    Finally, $C > 0$ by~\eqref{eq:nudiff}.
\end{proof}

We now come to the main result of this section.

\begin{theorem} \label{thm:general-digit-sum}
    Let $a, b \ge 2$ be integers. Let $d$ be the smallest factor of $a$
    such that $\gcd(a/d, b) = 1$, and suppose that $\log(d) / \log(b)$ is irrational.
    Then $c_b(a^n) > C \log n$ for all sufficiently large $n$,
    where $C > 0$ depends only on $a$ and $b$.
\end{theorem}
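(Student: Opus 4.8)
The plan is to reduce to Theorem~\ref{thm:multiples-powers-log-bound} by first removing all factors of $b$ from $a^n$ and then locating, via Lemma~\ref{lem:nu-p-estimate}, a prime $p < b$ dividing $b$ whose power in the surviving cofactor still grows linearly in $n$. First I would unpack the definition of $d$: since $d$ is the smallest divisor of $a$ with $\gcd(a/d,b)=1$, it is precisely the \emph{$b$-part} of $a$, namely $d = \prod_{p \mid \gcd(a,b)} p^{\nu_p(a)}$, so that every prime factor of $d$ divides $b$ while $a/d$ is coprime to $b$. I would also record two consequences of the irrationality of $\log(d)/\log(b)$: first, $d \ge 2$ (otherwise the ratio is $0$); and second, $b$ is not prime (if $b$ were prime then $d$ would be a power of $b$ and the ratio would be an integer), so every prime factor of $b$ is strictly less than $b$.

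Next I would strip the $b$-power. Let $m$ be the largest integer with $b^m \mid a^n$ and set $t = a^n / b^m$, so that $b \nmid t$. Because $a/d$ is coprime to $b$, the $b$-divisibility of $a^n$ comes entirely from $d^n$, and in fact $m$ is also the largest integer with $b^m \mid d^n$. Writing $d^n = b^m t_1$ with $t_1 = d^n/b^m \ge 1$ and applying Lemma~\ref{lem:nu-p-estimate} (with $d$ in the role of $a$, which is legitimate since $\log(d)/\log(b)$ is irrational and $d \ge 2$) yields a prime $p \mid d$ and a constant $C_1 > 0$ with $\nu_p(t_1) \ge C_1 n$. Since $t = t_1 \cdot (a/d)^n$, and $p \mid d$ implies $p \mid b$ while $\gcd(a/d,b)=1$ forces $p \nmid (a/d)$, I obtain $\nu_p(t) = \nu_p(t_1) \ge C_1 n$.

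Now I would feed this into Theorem~\ref{thm:multiples-powers-log-bound} with the base pair $(p,b)$. The hypotheses hold: $p$ is a prime dividing $b$ with $2 \le p < b$; the integer $t$ is divisible by $p^{\lfloor C_1 n\rfloor}$; and $t$ is not divisible by $b$. The theorem therefore gives $c_b(t) > C' \log\lfloor C_1 n\rfloor$ once $\lfloor C_1 n\rfloor$ is large, and since $\log\lfloor C_1 n\rfloor = \log n + O(1)$ this becomes $c_b(t) > C \log n$ for a suitable $C > 0$ and all large $n$. Finally, multiplying by $b^m$ merely shifts the base-$b$ expansion by $m$ places and appends zeros, so $c_b(a^n) = c_b(b^m t) = c_b(t) > C \log n$, as required.

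The main obstacle is the first reduction: unlike the opening theorems of Section~\ref{sec:digital-sums-powers}, $a^n$ may be divisible by arbitrarily large powers of $b$, so one cannot apply the multiples-of-powers bound to $a^n$ directly. The resolution is to show that after removing the full $b$-power, the cofactor $t$ still carries a large power of a single prime $p < b$. The delicate point is that this $p$ must divide $b$ (so that $p^{C_1 n}$ is a genuine obstruction to a short base-$b$ expansion) and yet not be entirely absorbed into $b^m$; it is exactly the irrationality hypothesis, funneled through Lemma~\ref{lem:nu-p-estimate}, that guarantees such a surviving prime exists with valuation linear in $n$.
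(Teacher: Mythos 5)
Your proof is correct and follows essentially the same route as the paper's: decompose $a^n = b^m \cdot (\text{cofactor})$, observe that $d^n = b^m t_1$, apply Lemma~\ref{lem:nu-p-estimate} to $d$ to find a prime $p \mid d$ with $\nu_p(t_1) \ge C_1 n$, transfer this valuation to the full cofactor via $\gcd(a/d, b) = 1$, invoke Theorem~\ref{thm:multiples-powers-log-bound} with the pair $(p, b)$, and finish by noting that trailing zeros do not affect $c_b$. In fact you supply two small verifications the paper leaves implicit — that irrationality forces $b$ to be composite (so $p < b$, as Theorem~\ref{thm:multiples-powers-log-bound} requires) and the explicit bookkeeping with $\lfloor C_1 n \rfloor$ — which only strengthens the write-up.
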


\begin{proof}
    Write $a^n = b^m s$ with $b \nmid s$, and set $g = a/d$.
    By the minimality of $d$, the prime divisors of $d$ are exactly
    those prime divisors of $a$ that also divide $b$;
    in particular $\gcd(d, g) = 1$ and any prime $p \mid d$ satisfies $p \mid b$.

    Since $g^n \mid a^n$ and $\gcd(g, b) = 1$, it follows that $g^n \mid s$.
    Define $t = s / g^n$; then $d^n = b^m t$.

    By Lemma~\ref{lem:nu-p-estimate}, applied with $a \gets d$,
    there exists a prime divisor $p$ of $d$ such that
    \[
        \nu_p(t) \ge C' n
    \]
    for some $C' > 0$ depending only on $a$ and $b$.

    Because $\gcd(d, g) = 1$, the prime $p$ does not divide $g$,
    and hence $\nu_p(s) = \nu_p(t)$.
    Therefore, by Theorem~\ref{thm:multiples-powers-log-bound}, applied with $a \gets p$ and $N \gets s$,
    \[
        c_b(s) > C \log n
    \]
    for $n$ sufficiently large.

    Finally, $c_b(a^n) = c_b(s)$
    since $s$ and $a^n$ differ only by a power of $b$
    and thus have the same base-$b$ expansion up to trailing zeros.

    Consequently, $c_b(a^n) > C \log n$ for $n$ sufficiently large.
\end{proof}

In 1973, Senge and Straus~\cite[Theorem 3]{senge-straus1973}
showed that for integers $a \ge 1$ and $b \ge 2$,
\[
    \lim_{n \to \infty} c_b(a^n) = \infty
    \quad\text{if and only if}\quad
    \frac{\log a}{\log b} \text{ is irrational}.
\]
However, their result does not yield any explicit lower bound.

Subsequently, Stewart~\cite[Theorem 2]{stewart1980} proved
that if $\log(a)/\log(b)$ is irrational then
\[
    c_b(a^n) > \frac{\log n}{\log\log n + C} - 1
\]
for all $n>4$, where $C$ depends only on $a$ and $b$.
This bound is quite general but grows slower than logarithmically.
This result was extended to certain linear recurrence sequences
by Luca~\cite{luca2000}.

\begin{remark}
    The arguments in this section apply equally to $a^n$ and to any multiple of $a^n$.
    But it can be shown that every $3^n$ has a multiple of the form $10^k+8$,
    which has only two nonzero decimal digits.
    Thus, any approach that does not distinguish $a^n$ from its multiples cannot
    prove that $c_{10}(3^n)$ tends to infinity.
    We overcome this limitation in Section~\ref{sec:stewart-theorem}.
\end{remark}

\section{Digital sums of factorials and LCMs}
\label{sec:digital-sums-factorials-lcms}
In this section, we prove logarithmic lower bounds for the base-$b$
digital sums of $n!$ and $\Lambda_n = \lcm(1,\ldots, n)$.
In contrast with the situation for $a^n$, where prime-power divisibility
played a central role, the key feature for factorials and LCMs
is that both $n!$ and $\Lambda_n$ are divisible by large integers
of the form $b^r - 1$.

The key insight is provided by the following lemma, which was
originally proved by Stolarsky~\cite{stolarsky1980} for base $2$,
and later extended to general bases by Balog and Dartyge~\cite{balog-dartyge2012}.

\begin{lemma} \label{lem:balog}
    Let $m, r \ge 1$ and $b \ge 2$ be integers.
    If $m$ is divisible by $b^r - 1$ then $s_b(m) \ge (b-1)r$.
\end{lemma}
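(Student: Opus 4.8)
The plan is to reduce the claim to a statement about the base-$b$ digits of $m$ modulo $b^r-1$. The governing observation is that $b^r \equiv 1 \pmod{b^r-1}$, so each block of $r$ consecutive base-$b$ digits contributes the same value modulo $b^r-1$ regardless of its position. First I would write the base-$b$ expansion of $m$ and group the digits into consecutive blocks of length $r$, writing $m = \sum_{j \ge 0} B_j b^{rj}$, where each block value $B_j$ satisfies $0 \le B_j < b^r$. Since $b^{rj} \equiv 1 \pmod{b^r-1}$ for every $j$, reducing $m$ modulo $b^r - 1$ gives $m \equiv \sum_j B_j \pmod{b^r-1}$, and by hypothesis this sum is divisible by $b^r-1$.

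Next I would let $S = \sum_j B_j$ denote the sum of the block values and observe that $S$ is a positive multiple of $b^r-1$, because $m \ge b^r - 1 > 0$ forces at least one nonzero block, so $S \ge b^r - 1$. The key numerical point is that the base-$b$ digital sum of $m$ equals the sum of the base-$b$ digital sums of the individual blocks, that is $s_b(m) = \sum_j s_b(B_j)$, since grouping digits into blocks does not change the total digit sum. By subadditivity of $s_b$ I then have $s_b(S) \le \sum_j s_b(B_j) = s_b(m)$, so it suffices to bound $s_b(S)$ from below.

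The crux is therefore to show that any positive multiple $S$ of $b^r - 1$ satisfies $s_b(S) \ge (b-1)r$. For this I would note that $b^r - 1$ itself has base-$b$ expansion consisting of $r$ digits each equal to $b-1$, so $s_b(b^r-1) = (b-1)r$, giving the sharp constant. To handle an arbitrary positive multiple, I would argue that $s_b(S) \equiv S \equiv 0 \pmod{b-1}$ and that $s_b(S) > 0$; combined with a lower bound coming from the size of $S$, one concludes $s_b(S) \ge (b-1)r$. Concretely, since $S$ is a positive multiple of $b^r - 1$, either $S = b^r - 1$, handled directly, or $S \ge 2(b^r-1) \ge b^r$, in which case $S$ has at least $r+1$ base-$b$ digits; I would then use the congruence $s_b(S) \equiv 0 \pmod{b-1}$ together with $s_b(S) \ge 1$ to push the digital sum up to the required multiple of $(b-1)$.

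The main obstacle I anticipate is the final step: going from the divisibility fact $s_b(S) \equiv 0 \pmod{b-1}$ and a crude size bound to the exact inequality $s_b(S) \ge (b-1)r$. The congruence alone only guarantees that $s_b(S)$ is \emph{some} positive multiple of $b-1$, which could be as small as $b-1$, far below $(b-1)r$. Closing this gap cleanly will likely require either an inductive argument on the number of blocks, repeatedly subtracting $b^r-1$ and tracking how the digit sum changes, or a direct counting argument showing that a multiple of $b^r-1$ cannot have fewer than $r$ nonzero contributions in the appropriate sense. This is where the genuine content of the lemma lies, and it is the step I would invest the most care in making rigorous.
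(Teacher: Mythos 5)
Your first reduction is exactly the right move, and it is the same one the paper makes: group the digits of $m$ into blocks of length $r$, pass to the block sum $S = \sum_j B_j$, and note that $S$ is again a positive multiple of $b^r-1$ with $s_b(S) \le s_b(m)$. But your proposal then stalls precisely where you say it does, and the gap is genuine. Worse, the ``crux'' you isolate --- \emph{every positive multiple $S$ of $b^r-1$ satisfies $s_b(S) \ge (b-1)r$} --- is not a smaller auxiliary fact; it is literally the lemma you are trying to prove, restated with $S$ in place of $m$. As written, your argument reduces the lemma to itself. The congruence route you then attempt ($s_b(S) \equiv 0 \pmod{b-1}$ plus crude size bounds) cannot close this, for the reason you yourself identify: the congruence only forces $s_b(S)$ to be a positive multiple of $b-1$, and nothing in it distinguishes $b-1$ from $(b-1)r$.

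The missing idea is that your reduction should be \emph{iterated}, which turns the apparent circularity into a well-founded induction. The key observations (both in the paper) are: (i) $S = G(m) < m$ whenever $m \ge b^r$, and (ii) the unique positive multiple of $b^r-1$ lying below $b^r$ is $b^r-1$ itself, whose digit sum is exactly $(b-1)r$. So argue by strong induction on $m$: if $m < b^r$, then $m = b^r - 1$ and you are done; if $m \ge b^r$, then $S$ is a strictly smaller positive multiple of $b^r-1$, the induction hypothesis gives $s_b(S) \ge (b-1)r$, and your inequality $s_b(m) \ge s_b(S)$ finishes the proof. Equivalently, iterate $G$ until the sequence of block sums, which is strictly decreasing while its terms exceed $b^r-1$, terminates at $b^r-1$. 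You gestured at this (``an inductive argument on the number of blocks'') as one possible escape, but you left it as the unproven obstacle and spent your effort on the mod-$(b-1)$ approach, which is a dead end; the induction is not a technical detail to be filled in later but the entire content of the proof.
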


\begin{proof}
    Write the base-$b$ expansion of $m$ as a concatenation of $r$-digit blocks,
    so that
    \[
        m = \sum_{i=0}^{k-1} B_i b^{ri}, \quad 0 \le B_i < b^r, \quad B_{k-1} \ge 1.
    \]

    Define the \emph{block sum} operator $G$ by
    \[
        G(m) = \sum_{i=0}^{k-1} B_i.
    \]
    Observe that
    $G(m) \equiv m \pmod{b^r - 1}$, since $b^r \equiv 1 \pmod{b^r - 1}$.
    Also, $G(m) < m$ for $m \ge b^r$, and $G(m) = m$ for $0 \le m < b^r$.

    Iterate $G$ on $m$: define $m_0 = m$ and $m_{t+1} = G(m_t)$.
    By the observations above, $(m_t)$ is a sequence of
    positive multiples of $b^r - 1$ that is strictly decreasing
    while its terms exceed $b^r - 1$.
    Therefore, the sequence must eventually reach $b^r - 1$,
    which is the unique positive multiple of $b^r - 1$ that is less than $b^r$.

    Since $s_b$ is subadditive,
    \[
        s_b(G(m)) \le \sum_{i=0}^{k-1} s_b(B_i) = s_b(m).
    \]

    Therefore,
    \[
        s_b(m) \ge s_b(b^r - 1) = (b-1)r. \qedhere
    \]
\end{proof}

This lemma has an immediate consequence for factorials and least common multiples.
If $n \ge b^r - 1$, then both $n!$ and $\Lambda_n$ are divisible by $b^r - 1$,
and hence
\[
    s_b(n!) \ge (b-1)r, \quad s_b(\Lambda_n) \ge (b-1)r.
\]
Since one may choose $r = \lfloor \log_b (n+1) \rfloor$, this yields
lower bounds of the form
\[
    s_b(n!) > C \log n, \quad s_b(\Lambda_n) > C \log n
\]
for some $C > 0$ depending only on $b$.

Luca~\cite{luca2002} proved the same results using similar methods.
In 2015, Sanna~\cite{sanna2015} used the lemma above,
together with more advanced methods, to prove that
\[
    s_b(n!) > C \log n \log \log \log n
\]
for all integers $n > e^{e}$ and all $b \ge 2$, where $C$ depends only on $b$.
The same estimate holds for $s_b(\Lambda_n)$.
Our interest here is not to compete with the sharpest known results,
but rather to show that simple divisibility arguments
already imply logarithmic growth.

We conjecture that $s_b(n!) \asymp n \log n$ and $s_b(\Lambda_n) \asymp n$,
based on the assumption that, apart from trailing zeros, their digits
are approximately uniformly distributed among $\{0, 1, \ldots, b-1\}$.
However, these conjectures remain unproved.
See Figure~\ref{fig:s10_fact_lcm_scatter}.

\begin{figure}
    \centering
    \includegraphics[width=0.8\linewidth]{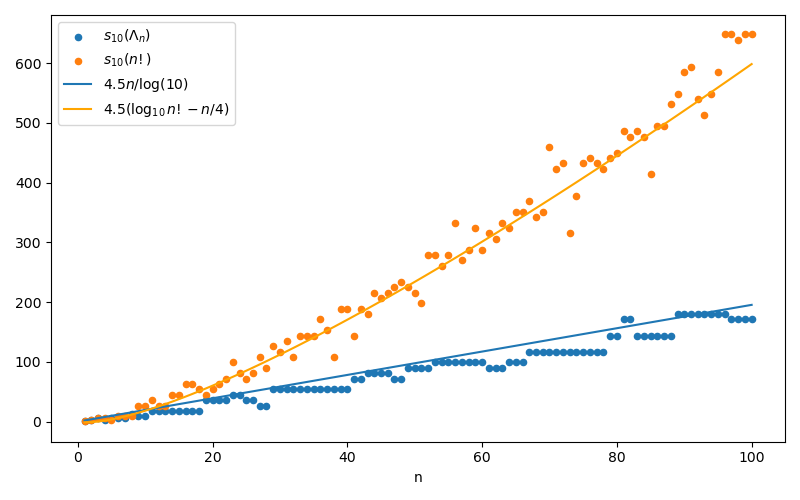}
    \caption{Scatter plot of the digital sums of $n!$ and $\Lambda_n$
        for $n \le 100$,
        together with their heuristic approximations.}
    \label{fig:s10_fact_lcm_scatter}
\end{figure}

\section{Stewart's Theorem} \label{sec:stewart-theorem}
In this final section, we prove that the number of nonzero digits in the base-$b$
expansion of $a^n$ tends to infinity as $n \to \infty$, provided that $\log(a)/\log(b)$
is irrational.
This result appears in earlier work of Senge and Straus~\cite{senge-straus1973}
and Stewart~\cite{stewart1980}, but we present an argument that we hope is more accessible.

The irrationality condition is necessary.
Indeed, if $\log(a)/\log(b) = r/s \in \mathbb{Q}$, then
\[
    a^{ns} = b^{nr}
\]
for every integer $n$, so $a^{ns}$ has only one nonzero digit in base $b$.


The following lemma permits us to disregard trailing zeros in the base-$b$ expansion of $a^n$.

\begin{lemma} \label{lem:digit-position-growth}
    Let $a, b \ge 2$ be integers with $\log(a)/\log(b)$ irrational.
    Then there exist positive constants $C$ and $C'$, depending only on $a$ and $b$,
    such that whenever
    \[
        a^n = b^r t,
    \]
    we have
    \[
        C n \le \log t \le C' n.
    \]
    In other words, $\log t \asymp n$.
\end{lemma}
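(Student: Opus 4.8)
The plan is to handle the two inequalities separately; the upper bound is essentially free, and all of the content lies in the lower bound. For the upper bound, I would simply observe that $t = a^n / b^r$ with $b^r \ge 1$, so $t \le a^n$ and hence $\log t \le n \log a$. Thus $C' = \log a$ is admissible, and $C' > 0$ because $a \ge 2$.

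For the lower bound I would invoke Lemma~\ref{lem:nu-p-estimate}, whose hypotheses match ours exactly: $\log(a)/\log(b)$ is irrational, and the equation $a^n = b^r t$ already expresses $a^n$ as a power of $b$ times a positive integer $t$ (there is no need to assume that $r$ is maximal or that $b \nmid t$). Applying that lemma with $m \gets r$ yields a prime factor $p$ of $a$ and a constant $C_0 > 0$, depending only on $a$ and $b$, with $\nu_p(t) \ge C_0 n$. Since $p^{\nu_p(t)}$ divides $t$, I would then conclude
\[
    t \ge p^{\nu_p(t)} \ge 2^{\nu_p(t)} \ge 2^{C_0 n},
\]
and taking logarithms gives $\log t \ge (C_0 \log 2)\, n$. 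Hence $C = C_0 \log 2 > 0$ works, which completes the argument.

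A self-contained alternative, avoiding the citation, runs as follows. Writing $\alpha_p = \nu_p(a)$ and $\beta_p = \nu_p(b)$ and setting $\rho = \min_{p \mid b} \alpha_p/\beta_p$, the divisibility $b^r \mid a^n$ forces $r\beta_p \le n\alpha_p$ for every prime $p \mid b$, so that $r \le n\rho$ and
\[
    \log t = n \log a - r \log b \ge n\bigl(\log a - \rho \log b\bigr).
\]
The crux—and the only place the hypothesis is used—will be to show that $\log a - \rho \log b = \log\bigl(a/b^{\rho}\bigr)$ is strictly positive. Expanding $\log a - \rho \log b = \sum_p (\alpha_p - \rho\beta_p)\log p$ shows that every summand is nonnegative, and equality throughout would force $\alpha_p = \rho\beta_p$ for all $p$, i.e. $a = b^{\rho}$ with $\rho$ rational, contradicting the irrationality of $\log(a)/\log(b)$. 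I expect this positivity to be the main obstacle: it is precisely the quantitative content that the irrationality assumption supplies, and it is exactly what Lemma~\ref{lem:nu-p-estimate} packages away in the first route.
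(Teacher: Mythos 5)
Your first route is essentially the paper's own proof: it invokes Lemma~\ref{lem:nu-p-estimate} to produce a prime $p \mid a$ with $\nu_p(t) \ge C_0 n$, deduces the lower bound $\log t \ge C_0 n \log p$ (the paper keeps $\log p$ where you coarsen to $\log 2$, an immaterial difference), and pairs it with the trivial upper bound $t \le a^n$, $C' = \log a$. Your self-contained alternative is also correct and is genuinely different: comparing valuations at the primes dividing $b$ gives $r \le n\rho$ with $\rho = \min_{p \mid b} \nu_p(a)/\nu_p(b)$, hence $\log t \ge n(\log a - \rho \log b)$, and positivity of $\log a - \rho\log b$ holds because equality in $\sum_p \bigl(\nu_p(a) - \rho\,\nu_p(b)\bigr)\log p \ge 0$ would force $\nu_p(a) = \rho\,\nu_p(b)$ for every prime $p$, i.e.\ $a^v = b^u$ for integers $u, v$, contradicting the irrationality of $\log(a)/\log(b)$. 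This second argument buys two things: it is more elementary, bypassing Lemma~\ref{lem:nu-p-estimate} and its two-prime determinant trick entirely, and it yields the asymptotically optimal constant $C = \log a - \rho \log b$ (since the maximal admissible $r$ is $n\rho + O(1)$), whereas the paper's constant $C_1 \log p$ is generally weaker. What the paper's route buys in exchange is economy: Lemma~\ref{lem:nu-p-estimate} is needed anyway for Theorem~\ref{thm:general-digit-sum}, so reusing it keeps the total argument shorter.
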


\begin{proof}
    By Lemma~\ref{lem:nu-p-estimate}, there exists a prime divisor $p$ of $a$ such that
    \[
        \nu_p(t) > C_1 n
    \]
    holds for all $n$, where $C_1 > 0$ depends only on $a$ and $b$.

    Therefore, $\displaystyle t > p^{C_1 n}$ and hence
    \[
        \log t > C_1 n \log p.
    \]

    On the other hand, $t \le a^n$ hence $\log t \le n \log a$.

    Therefore,
    \[
        Cn \le \log t \le C'n
    \]
    holds for all $n$, where $C = C_1 \log p$ and $C' = \log a$.
\end{proof}

We require the following theorem, due to Baker and Wüstholz~\cite{baker-wustholz1993},
which we state without proof.

\begin{theorem} \label{baker}
    Let
    \[
        \Lambda = b_1 \log \alpha_1 + \cdots + b_n \log \alpha_n,
    \]
    where $b_1, \ldots, b_n$ are integers. Assume that
    $\alpha_1, \ldots, \alpha_n$ are algebraic numbers with heights
    at most $A_1, \ldots, A_n$ (all $\ge e$) respectively and
    that their logarithms have their principal values.
    Further assume that $b_1, \ldots, b_n$ have absolute values
    at most $B$ ($\ge e$). If $\Lambda \ne 0$ then
    \[
        \log |\Lambda| > -(16nd)^{2(n+2)} \log A_1 \cdots \log A_n \log B,
    \]
    where $d$ denotes the degree of $\mathbb{Q}(\alpha_1, \ldots, \alpha_n)$.
\end{theorem}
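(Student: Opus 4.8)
The plan is to prove Theorem~\ref{baker} by the transcendence method of Baker, in the form sharpened by Wüstholz's zero estimate. A complete proof lies far outside the elementary scope of this paper, so I sketch only the architecture of the argument. Throughout I would argue by contradiction: assume $\Lambda \neq 0$ yet $|\Lambda|$ is strictly smaller than the asserted bound, and aim to derive a contradiction. As a preliminary normalization, I would assume $b_n \neq 0$ and rewrite the relation as
\[
    \log\alpha_n = -\frac{b_1}{b_n}\log\alpha_1 - \cdots - \frac{b_{n-1}}{b_n}\log\alpha_{n-1} + \frac{\Lambda}{b_n},
\]
so that the hypothesis ``$|\Lambda|$ small'' says precisely that $\log\alpha_n$ is very nearly a fixed rational combination of $\log\alpha_1, \dots, \log\alpha_{n-1}$. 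I would then fix integer parameters $L$ (a degree bound) and $M$ (an order-of-vanishing bound), to be optimized at the very end in terms of $n$, $d$, the $\log A_j$, and $\log B$.

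The first substantive step is the auxiliary construction. Using Siegel's lemma -- the pigeonhole principle for integer solutions of underdetermined linear systems -- I would produce a nonzero polynomial $P$ in $n$ variables, with algebraic-integer coefficients of controlled height, such that the entire function
\[
    \Phi(z) = P\bigl(z,\, e^{z\log\alpha_1},\, \dots,\, e^{z\log\alpha_n}\bigr)
\]
together with its derivatives up to order $M$ vanishes at every integer $z$ in a prescribed initial range. This is possible because the number of coefficients of $P$ can be arranged to exceed the number of linear vanishing conditions imposed; Siegel's lemma then bounds the height of the resulting coefficients. The second step is the analytic extrapolation. By the maximum modulus principle, applied through repeated use of the Schwarz lemma, the smallness of $|\Lambda|$ propagates: because $\Phi$ and its low-order derivatives vanish on the initial grid and $\log\alpha_n$ is nearly a combination of the others, $\Phi$ must in fact be extraordinarily small at a much larger set of points and to much higher order than the construction explicitly guaranteed. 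I would interleave this with the arithmetic side. Each relevant derivative value is an algebraic number, so its nonvanishing would force, by a Liouville-type inequality in terms of height and the degree $d$, a lower bound on its absolute value. Comparing that lower bound against the analytic upper bound forces the value to be exactly zero, so $\Phi$ vanishes far more than was built in.

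The hard part is the concluding zero estimate, and this is the genuine obstacle. One must show that $\Phi$ cannot vanish to such high order at so many points unless $P \equiv 0$, which contradicts its nontrivial construction. In Baker's original treatment this is a delicate combinatorial computation exploiting the multiplicative independence of $\alpha_1, \dots, \alpha_n$ through the nonvanishing of Vandermonde-type determinants. In the Baker--Wüstholz refinement it is replaced by Wüstholz's sharp multiplicity (zero) estimate on the commutative group variety $\mathbb{G}_a \times \mathbb{G}_m^n$, and it is precisely this input that yields the clean explicit constant $(16nd)^{2(n+2)}$ rather than an unspecified one. This algebraic-geometric ingredient is the deepest part of the whole proof and is what places the theorem well beyond the elementary methods used elsewhere in this article.

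Finally, with the contradiction in hand, I would optimize the parameters. Balancing the height estimate supplied by Siegel's lemma against the gain extracted in the extrapolation step -- and tracking the dependence on $n$, $d$, $\log A_1, \dots, \log A_n$, and $\log B$ through each inequality -- determines admissible choices of $L$ and $M$ and reads off the stated bound
\[
    \log|\Lambda| > -(16nd)^{2(n+2)} \log A_1 \cdots \log A_n \log B.
\]
I emphasize that the bookkeeping required to produce this explicit exponent is substantial, and that the zero estimate remains the single step one cannot shortcut; it is for this reason that the theorem is quoted here as a black box rather than proved.
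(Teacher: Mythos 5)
The paper does not prove this theorem at all: it is stated explicitly ``without proof'' as a citation to Baker and W\"ustholz (1993), and it functions in the paper purely as a black box used to prove Theorem~\ref{thm:digit-lower-bound}. So there is no internal proof to compare yours against, and your closing decision---to quote the result rather than prove it---is exactly the paper's own treatment.

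As for the sketch itself: it is a broadly accurate description of the architecture of the transcendence-method proof (auxiliary polynomial via Siegel's lemma, analytic extrapolation interleaved with Liouville-type arithmetic lower bounds, and W\"ustholz's multiplicity estimate on a commutative group variety as the decisive zero estimate). But you should be clear that it is an outline, not a proof: no step is actually carried out, and in particular the explicit constant $(16nd)^{2(n+2)}$---which is the entire point of citing Baker--W\"ustholz rather than an earlier qualitative form of Baker's theorem---is never derived; you only gesture at the parameter optimization that would produce it. You acknowledge this candidly, and for the purposes of this paper that is the right call; a genuine proof of this theorem is a research monograph's worth of work and does not belong here. The one substantive caution: your sketch follows the shape of Baker's classical argument (auxiliary function in $z$, extrapolation on integer points), whereas the Baker--W\"ustholz paper itself is organized around the group-variety formalism throughout, so even as a roadmap to the cited reference your outline is closer in spirit to older expositions (e.g.\ Baker's original papers or his book) than to the actual source being cited.
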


Recall that a nonzero algebraic number $\alpha \in \mathbb{C}$ is a root of a unique irreducible
integer polynomial $P$ with positive leading coefficient and coprime coefficients.
The \emph{height} of $\alpha$ is the maximum of the absolute values of the coefficients of $P$;
the height of a rational integer is equal to its absolute value.
Finally, the \emph{degree} of $\mathbb{Q}(\alpha_1, \ldots, \alpha_n)$
is its dimension as a vector space over $\mathbb{Q}$.

In our application of Theorem~\ref{baker},
we assume that $n = 3$, and that $\alpha_1, \alpha_2, \alpha_3$ are rational integers.
So in this context, $d = 1$ and $(16nd)^{2(n+2)} = 48^{10}$.

We now prove the main theorem of the section.

\begin{theorem}\label{thm:digit-lower-bound}
    Let $a, b \ge 2$ be integers, and suppose that
    $\log(a)/\log(b)$ is irrational.
    Then there exists $C > 0$, depending only on $a$ and $b$,
    such that
    \[
        c_b(a^n) > \frac{\log n}{\log \log n + C}
    \]
    holds for all sufficiently large $n$.
\end{theorem}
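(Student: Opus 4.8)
The plan is to write $a^n = b^r t$ with $b \nmid t$, so that $c_b(a^n) = c_b(t)$. By Lemma~\ref{lem:digit-position-growth}, we have $\log t \asymp n$, so it suffices to bound $c_b(t)$ from below in terms of $\log t$. Suppose $t$ has nonzero base-$b$ digits in positions $0 = i_0 < i_1 < \cdots < i_{c-1}$, where $c = c_b(t)$; the leading position satisfies $i_{c-1} \asymp \log t \asymp n$. The strategy is to show that consecutive nonzero digit positions cannot be too far apart: I would argue that if two adjacent nonzero blocks are separated by a long run of zeros, then $t$ is extremely close to a number of the form $b^i(\text{small integer})$, which forces a linear form in logarithms to be very small. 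Baker's theorem (Theorem~\ref{baker}) then bounds this gap from below, so the $c$ gaps together cannot cover the full range $[0, i_{c-1}]$ unless $c$ is large.

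Concretely, I would consider the value $t \bmod b^{i_j + 1}$, i.e.\ the truncation of $t$ to its lowest $i_j + 1$ digits, call it $t_j$. The difference $t - $ (its top portion) isolates a consecutive pair of nonzero digits and shows that $a^n$ is close to $b^{i_j}(c_0 + c_1 b^{\,\Delta})$ for the gap $\Delta = i_{j+1} - i_j$, or more cleanly, that some quantity of the form $\big| a^n - b^{r}\,u \big|$ is small relative to $b^{i_j}$, where $u$ is an integer built from two digits. Dividing through, this says $\big| b^{m} a^{n} u^{-1} - 1\big|$ is small for suitable integers, and taking logarithms produces a linear form $\Lambda = n \log a - m \log b - \log u$ in the logarithms of the three rational integers $a$, $b$, $u$. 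Since $\log(a)/\log(b)$ is irrational, $\Lambda \neq 0$, so Theorem~\ref{baker} applies with the parameters noted after its statement ($n=3$, $d=1$, constant $48^{10}$).

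The heights here are $A_1 \asymp a$, $A_2 \asymp b$ (both $O(1)$), and $A_3 = u$, while $B \asymp n$ since the exponents $m, n$ are of size $O(n)$. Thus Baker's bound reads roughly
\[
    \log|\Lambda| > -C_0 \,\log u \,\log n,
\]
with $C_0$ absorbing $48^{10} \log a \log b$ and depending only on $a, b$. On the other hand, the smallness of $\Lambda$ coming from the long zero-gap gives an upper bound $\log|\Lambda| < -(\text{gap})\log b + O(\log u)$. Comparing the two shows each gap is at most $O(\log u \cdot \log n)$, where $\log u = O(b\cdot c_b(u)) = O(1)$ per two-digit block. Summing the $c$ gaps and using $i_{c-1} \asymp n$, I would obtain $n \lesssim c \cdot \log n$, i.e.\ $c_b(a^n) \gtrsim \log n / \log\log n$, and a careful accounting of the constants yields the stated form $c_b(a^n) > \log n / (\log\log n + C)$.

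The main obstacle I anticipate is the bookkeeping that converts ``long run of zeros between nonzero blocks'' into a genuinely nonzero linear form with controlled height, and then extracting the clean denominator $\log\log n + C$ rather than a messier expression. In particular, one must choose the truncation $u$ so that it is a fixed-size integer (height $O(1)$, independent of $n$), ensuring $\log A_3 = O(1)$; otherwise the height grows with $n$ and the bound degrades. The irrationality hypothesis is exactly what guarantees $\Lambda \neq 0$ so that Baker's theorem is applicable, and I expect the delicate point to be verifying $\Lambda \neq 0$ simultaneously for every gap while keeping the height of $u$ bounded. The asymptotic sharpness ($\log\log n$ in the denominator, matching Stewart) comes precisely from the $\log B = \log n$ factor in Baker's estimate, so I would be careful to track that factor exactly.
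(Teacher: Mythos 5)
Your skeleton matches the paper's proof of this theorem: split $a^n$ at a nonzero digit, feed a three-term linear form $\Lambda = -n\log a + m\log b + \log u$ into Theorem~\ref{baker} (with $n=3$, $d=1$), convert the lower bound on $|\Lambda|$ into an upper bound on digit gaps, and sum over gaps, using Lemma~\ref{lem:digit-position-growth} to know the nonzero digits span a range $\asymp n$. But there is a genuine gap at exactly the point you flag as your ``main obstacle'': the truncation integer $u$ \emph{cannot} be chosen with height $O(1)$. For $|\Lambda|$ to be as small as $b^{-(\text{gap})}$, the integer $b^{m}u$ must approximate $a^n$ to within the size of the low part, which forces $u$ to agree with \emph{all} the digits of $a^n$ above the gap, not merely the two digits adjacent to it: an integer built from two digits straddling the gap differs from the relevant truncation of $a^n$ by the contribution of every other nonzero digit higher up, and that contribution is not small. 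So $\log u$ is necessarily of order the digit position at which you cut (the paper's $\log q \asymp m_i$), which can grow linearly in $n$, and the factor $\log A_3$ in Baker's bound is unbounded.

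This is not bookkeeping; it changes the shape of the conclusion. With $\log u \asymp m_i$, Baker's theorem gives $\log|\Lambda| > -C\, m_i \log n$, which bounds the \emph{ratio} $m_{i+1}/m_i$ by $C\log n$ rather than the \emph{difference} $m_{i+1}-m_i$. One therefore cannot add gaps; one telescopes logarithms of ratios, $\log m_k = \sum_{i=1}^{k-1}\log(m_{i+1}/m_i) < (k-1)(\log\log n + C)$, and combines this with $\log m_k = \log n + O(1)$ (from Lemma~\ref{lem:digit-position-growth}) to obtain $k > \log n/(\log\log n + C)$. That is the paper's route, and it is precisely where the $\log\log n$ denominator comes from. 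Your final deduction is also internally inconsistent and should have been a warning sign: from ``each gap is $O(\log n)$'' and $i_{c-1}\asymp n$ you would get $n \lesssim c\log n$, i.e.\ $c \gtrsim n/\log n$ --- far stronger than Stewart's theorem and not at all the same as $c \gtrsim \log n/\log\log n$; the premise $\log A_3 = O(1)$ that would yield it is unattainable. One smaller correction: in the paper's setup $\Lambda \ne 0$ follows simply because the truncation is strictly less than $a^n$ (the discarded remainder is positive, as there is a later nonzero digit); the irrationality hypothesis is instead what powers Lemma~\ref{lem:digit-position-growth}, guaranteeing that after stripping trailing zeros the expansion still spans $\asymp n$ positions.
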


In the following, $C$ and $C_1, C_2, C_3, \ldots$ denote effectively computable positive real constants,
depending on $a$ and $b$, but independent of $n$.
Inequalities involving these constants are assumed to hold for all sufficiently large $n$.

\begin{proof}
    Let
    \[
        a^n = b^m \left(d_1 b^{-m_1} + \cdots + d_k b^{-m_k}\right),
    \]
    where $m = \lceil \log_b a^n \rceil$, $k = c_b(a^n)$,
    $d_i \in \{1, \ldots, b-1\}$,
    and
    \[
        1 = m_1 < \cdots < m_k \le m.
    \]
    That is, $d_1, \ldots, d_k$ are the nonzero digits of $a^n$ in base $b$,
    and $m_1, \ldots, m_k$ are their positions when the digits are numbered
    from left (most-significant) to right (least-significant).
    We may assume that $k \ge 2$.

    Fix $i \in \{1,\dots,k-1\}$.
    Our goal is to show that large gaps between digit positions cannot occur;
    in particular, we prove that
    \[
        \frac{m_{i+1}}{m_i} < C \log n.
    \]
    This implies that the number of gaps,
    and hence the number of nonzero digits, cannot be too small.

    Define
    \begin{align*}\label{ineq:base-b-bounds}
        q & = b^{m_{i}} (d_1 b^{-m_{1}} + \cdots + d_i b^{-m_{i}}), \\
        r & = b^m (d_{i+1} b^{-m_{i+1}} + \cdots + d_k b^{-m_{k}}),
    \end{align*}
    so that
    \[
        a^n = b^{m - m_i} q + r.
    \]
    In other words, if we split the base-$b$ expansion of $a^n$ at the $i$-th nonzero digit,
    the digits up to and including that digit combine to form the integer $q$,
    while the remaining digits combine to form the integer $r$.

    From the base-$b$ expansion, we obtain the bounds
    \begin{align*}
        b^{m-1}       & < a^n  < b^m,             \\
        b^{m_{i}-1}   & < q    < b^{m_i},         \\
        b^{m-m_{i+1}} & \le r  < b^{m-m_{i+1}+1}.
    \end{align*}

    Consequently,
    \begin{equation}\label{ineq:anr-bounds}
        b^{-m_{i+1}} < a^{-n} r < b^{-m_{i+1}+2},
    \end{equation}

    which implies that
    \[
        \frac{m_{i+1} - 2}{m_{i}} <
        \frac{-\log(a^{-n}r)}{\log q} <
        \frac{m_{i+1}}{m_{i} - 1}
    \]
    provided that $q \ge 2$ and $m_i \ge 2$.

    If $m_i \ge 3$ (and $m_{i+1} \ge 4$) then
    \[
        \frac{m_{i+1}-2}{m_{i}} \ge \frac12\, \frac{m_{i+1}}{m_{i}},
        \quad
        \frac{m_{i+1}}{m_{i}-1} \le \frac32\, \frac{m_{i+1}}{m_{i}},
    \]
    hence
    \begin{equation} \label{ineq:ratio-estimate}
        \frac12\, \frac{m_{i+1}}{m_{i}} <
        \frac{-\log(a^{-n}r)}{\log q} <
        \frac32\, \frac{m_{i+1}}{m_{i}}.
    \end{equation}

    Now set
    \begin{equation} \label{def:Lambda}
        \Lambda = \log(a^{-n} b^{m - m_i} q) = -n \log a + (m - m_i) \log b + \log q.
    \end{equation}
    Since $a^{-n} b^{m-m_i}q = 1 - a^{-n}r$ and $a^{-n} r < 1/2$,
    \begin{equation} \label{ineq:Lambda-anr}
        |\Lambda| = -\log(1 - a^{-n} r) < 2 a^{-n} r.
    \end{equation}

    Applying Baker's theorem to~\eqref{def:Lambda} yields a lower bound on $|\Lambda|$.
    This implies an upper bound on the gap $m_{i+1}/m_i$, as shown in~\eqref{ineq:gap-bound} below.
    There are two cases, depending on the size of $q$.

    \begin{case} $q < b^2$, or equivalently $m_i \le 2$.

    Here, the $\log \alpha_i$ are uniformly bounded, so Baker's theorem yields
    \[
        \log |\Lambda| > -C_1 \log n.
    \]
    Thus,
    \[
        \log (2 a^{-n} r) > -C_1 \log n,
    \]
    and hence,
    \[
        \log (2 b^{-m_{i+1}+2}) > -C_1 \log n,
    \]
    which implies that
    \[
        m_{i+1} < C_2 \log n, \quad C_2 > C_1 / \log b.
    \]

    Since $m_{i} \ge 1$, we also have
    \[
        \frac{m_{i+1}}{m_{i}} < C_2 \log n.
    \]
    \end{case}

    \begin{case} $q > b^2$, or equivalently $m_{i} \ge 3$.

    Here, $\log q$ is unbounded, so Baker's theorem yields
    \[
        \log |\Lambda| > -C_3 \log q \log n
    \]
    Thus,
    \[
        \log (2 a^{-n} r) > -C_3 \log q \log n,
    \]
    and hence,
    \begin{equation} \label{ineq:Lambda-upper}
        \frac{-\log(a^{-n} r)}{\log(q)} < C_4 \log n, \quad C_4 > C_3.
    \end{equation}

    Combining~\eqref{ineq:ratio-estimate} and~\eqref{ineq:Lambda-upper} gives
    \[
        \frac{m_{i+1}}{m_{i}} < C_5 \log n, \quad C_5 = 2\,C_4.
    \]
    \end{case}

    In either case, we have
    \begin{equation}
        \label{ineq:gap-bound}
        \frac{m_{i+1}}{m_{i}} < C_6 \log n, \quad C_6 = \max(C_2, C_5),
    \end{equation}
    and thus
    \begin{equation}
        \label{ineq:gap-bound-log}
        \log\left(\frac{m_{i+1}}{m_i}\right) < \log \log n + C_7,
        \quad C_7 = \log C_6.
    \end{equation}

    Summing the logarithms of these ratios,
    \[
        \log m_k = \sum_{i=1}^{k-1} \log \left(\frac{m_{i+1}}{m_{i}}\right)
    \]
    which yields
    \begin{equation} \label{ineq:sum-log-gaps}
        \log m_k < (k - 1) (\log \log n + C_{7}).
    \end{equation}

    Write $a^n$ as $b^{m-m_k} t$, where
    \[
        b^{m_k-1} < t < b^{m_k}.
    \]

    Thus $m_k \asymp \log t$, and $\log t \asymp n$ by Lemma~\ref{lem:digit-position-growth},
    so $m_k \asymp n$ and
    \begin{equation} \label{ineq:mk-growth}
        \log m_k = \log n + O(1).
    \end{equation}
    Therefore,~\eqref{ineq:sum-log-gaps} and~\eqref{ineq:mk-growth} imply that
    \[
        k > \frac{\log n}{\log \log n + C}
    \]
    as required.
\end{proof}

Since this is a long proof, let us review the main steps.
\begin{enumerate}
    \item Approximate $a^n$ by truncating to the $i$-th nonzero digit.
    \item Estimate the digit gap $m_{i+1} / m_i$ in terms of the truncation error $a^{-n} r$.
    \item Use Baker's theorem to obtain a lower bound on the truncation error.
    \item Deduce thereby an upper bound on the digit gap.
    \item Compute a lower bound on the number of gaps, and hence the number of nonzero digits.
\end{enumerate}

\bibstyle{amsplain}

\end{document}